
\documentclass[oneside, 12pt]{amsart}
\usepackage{amsfonts,amsmath,amsthm,amscd,amssymb,latexsym,amsbsy,cite}

\usepackage{mathtools}
\mathtoolsset{showonlyrefs}

\usepackage{easyReview}

\usepackage{enumitem}

\usepackage{tikz}
\usetikzlibrary{positioning, calc, trees, decorations.markings, patterns, arrows, arrows.meta}

\usepackage[a4paper, top=2cm, bottom=2cm, left=3cm, right=2cm, foot=1cm]{geometry}

\usepackage[mathlines, right]{lineno}

\newtheorem{theorem}{Theorem}[section]

\newtheorem{corollary}[theorem]{Corollary}
\newtheorem{proposition}[theorem]{Proposition}

\theoremstyle{definition}
\newtheorem{definition}[theorem]{Definition}

\theoremstyle{remark}
\newtheorem{example}[theorem]{Example}
\newtheorem{remark}[theorem]{Remark}

\makeatletter
\@namedef{subjclassname@2020}{%
  \textup{2020} Mathematics Subject Classification}
\makeatother

\numberwithin{equation}{section}

\newcommand{\acr}{\newline\indent}

\author{Oleksiy Dovgoshey}
\address{\textbf{Oleksiy Dovgoshey}\acr
Department of Theory of Functions,\acr
Institute of Applied Mathematics and Mechanics of NASU, Slovyansk, Ukraine, \acr
and \acr
Department of Mathematics and Statistics, \acr
University of Turku, Turku, Finland.}
\email{oleksiy.dovgoshey@gmail.com, oleksiy.dovgoshey@utu.fi}

\author{Ruslan Shanin}
\address{\textbf{Ruslan Shanin}\acr
Department of Mathematical Analysis, \acr
Odesa I.~I.~Mechnikov National University, Odesa, Ukraine}

\email{ruslanshanin@gmail.com}

\title{On the closure of one point sets in \(T_0\)-spaces}

\subjclass[2020]{Primary 54A10, 54D10}

\keywords{Alexandroff space, closure of one-point set, quasi-metric, \(T_0\)-space.}

\newcommand{\cl}{\operatorname{cl}}

\begin{document}

\begin{abstract}
Let \(X\) be a set and \(2^X\) be a set of all subsets of \(X\). The necessary and sufficient conditions under which a mapping \(X \to 2^X\) is a closure of one-point sets in some \(T_0\)-space \((X, \tau)\) are described. It is proved that every \(T_0\)-Alexandroff space is quasi-metrizable by some equidistant quasi-metric.
\end{abstract}

\maketitle

\section{Introduction}

In this paper, we study the closure of one-point sets in \(T_0\)-spaces and, in particular, in \(T_0\)-Alexandroff spaces.

The Alexandroff spaces were first considered in 1937 by P. S. Alexandroff \cite{Ale1937DR} under the name ``Diskrete Ra\"{u}me''. The first systematic study of Alexandroff spaces was carried by F. G. Arenas \cite{Are1999AS} for the case of arbitrary, not necessarily \(T_0\)-spaces. In the present paper we mainly consider the \(T_0\)-Alexandroff spaces which were studied, for example, in \cite{ME2005OAS}.

Some results connected with \(T_0\)-spaces can be found in~\cite{Lar1969MSAMS, Gou2013NHTADTSTIP, SAZS2020TOS, YX2025OCIS, JL2025NOSCALYC, OM2022OBBSOQPS, LZ2023LYCOQMS}. In particular, book~\cite{Gou2013NHTADTSTIP} contains useful information on quasi-metrics and \(T_0\)-spaces generated them. The \(T_0\)-quasi-metric spaces were also considered by authors of \cite{YK2019SCIQMS, KYJ2023SAADSOQMS, JY2020SCAACQME, JY2025FAOLACTQMS}.

It was proved by Zbigniew Karno~\cite{Karno2003} that a topological space satisfies the \(T_0\)-separation axiom if and only if any two distinct points of this space have distinct closures. Using this fact we describe in Theorem~\ref{t3.1} the necessary and sufficient conditions under which a mapping \(X \to 2^X\) can be extended to the closure operator in some \(T_0\)-space \((X, \tau)\). The same Theorem~\ref{t3.1} shows that the extension of \(X \to 2^X\) to the closure operator in \(T_0\)-topology on \(X\) is possible if and only if such extension exists in \(T_0\)-Alexandroff topology on \(X\).

In Theorem~\ref{t3.4} it is shown that every \(T_0\)-Alexandroff space is quasi-metrizable by some equidistant quasi-metric.

\section{Preliminaries}

Let us recall a method of generating topologies by closure operator. In what follows the symbol \(\cl_{\tau}(A)\) denotes the closure of a set \(A \subseteq X\) in a topological space \((X, \tau)\) and we write \(2^X\) for the set of all subsets of a set \(X\).

\begin{theorem}\label{t1.1}
Let \(X\) be a set and let a mapping \(\cl \colon 2^X \to 2^X\) satisfy the conditions:
\begin{enumerate}[label=\ensuremath{(c_\arabic*)}]
\item\label{t1.1:s1} \(\cl(\varnothing) = \varnothing\),
\item\label{t1.1:s2} \(A \subseteq \cl(A)\),
\item\label{t1.1:s3} \(\cl(\cl(A)) = \cl(A)\),
\item\label{t1.1:s4} \(\cl(A \cup B) = \cl(A) \cup \cl(B)\)
\end{enumerate}
for all \(A\), \(B \subseteq X\). Then there exists a topology \(\tau\) on \(X\) such that
\begin{equation}\label{t1.1:e1}
\cl(A) = \cl_{\tau}(A)
\end{equation}
for each \(A \subseteq X\).
\end{theorem}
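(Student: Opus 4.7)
The plan is to use the standard Kuratowski-style construction: define the closed sets directly from the operator $\cl$, take $\tau$ to be the collection of their complements, verify $\tau$ is a topology, and then check that the topological closure it induces coincides with the given $\cl$. Specifically, I would set
\[
\mathcal{F} := \{F \subseteq X : \cl(F) = F\}, \qquad \tau := \{X \setminus F : F \in \mathcal{F}\}.
\]

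Before checking the topology axioms, I would extract monotonicity of $\cl$ from \ref{t1.1:s4}: if $A \subseteq B$, then $\cl(B) = \cl(A \cup B) = \cl(A) \cup \cl(B)$, so $\cl(A) \subseteq \cl(B)$. With this in hand the verification that $\mathcal{F}$ is closed under finite unions and arbitrary intersections is routine: for unions, apply \ref{t1.1:s4} directly; for an intersection $F = \bigcap_{i \in I} F_i$ with each $F_i \in \mathcal{F}$, monotonicity gives $\cl(F) \subseteq \cl(F_i) = F_i$ for every $i$, hence $\cl(F) \subseteq F$, and the reverse inclusion is \ref{t1.1:s2}. Axiom \ref{t1.1:s1} together with \ref{t1.1:s2} applied to $X$ yields $\emptyset, X \in \mathcal{F}$. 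By De Morgan, $\tau$ is a topology on $X$.

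Finally I would prove \eqref{t1.1:e1}. By construction $\cl_\tau(A) = \bigcap \{F \in \mathcal{F} : A \subseteq F\}$. On the one hand, \ref{t1.1:s3} says $\cl(A) \in \mathcal{F}$, and \ref{t1.1:s2} says $A \subseteq \cl(A)$, so $\cl(A)$ is one of the sets in this intersection, giving $\cl_\tau(A) \subseteq \cl(A)$. On the other hand, for any $F \in \mathcal{F}$ containing $A$, monotonicity yields $\cl(A) \subseteq \cl(F) = F$, so $\cl(A) \subseteq \cl_\tau(A)$. This gives equality.

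There is no serious obstacle here; this is the classical Kuratowski closure theorem, and the only conceptual step worth flagging is that monotonicity of $\cl$ is not taken as a separate hypothesis but must be derived from \ref{t1.1:s4}. Since the remainder of the paper hinges on analyzing the \emph{restriction} of a closure operator to one-point sets, the proof above is really just the starting point the authors need in order to reconstruct the full closure operator (and hence the topology) from such partial data in Theorem~\ref{t3.1}.
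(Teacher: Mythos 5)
Your proof is correct and is exactly the standard Kuratowski closure-operator argument; the paper itself gives no proof but cites Proposition~1.2.7 of Engelking, whose proof is this same construction (closed sets as fixed points of \(\cl\), monotonicity derived from \ref{t1.1:s4}, and the two-inclusion verification of \eqref{t1.1:e1}). Nothing is missing.
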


For the proof see, for example, Proposition~1.2.7 in \cite{Engelking1989}.

\begin{remark}\label{r1.2}
The topology \(\tau\) satisfying~\eqref{t1.1:e1} for each \(A \subseteq X\) is unique, because any two topologies coincide if their closed sets are the same. Since the closure operator \(\cl = \cl_{\tau}\) satisfies conditions \ref{t1.1:s1}--\ref{t1.1:s4} for every topological space \((X, \tau)\), each topology on \(X\) can be generated using the method described in Theorem~\ref{t1.1}.
\end{remark}

\begin{definition}\label{d1.3}
A topological space \((X, \tau)\) is said to be \(T_0\)-space if for any two distinct points \(x\), \(y \in X\) there exists an open set \(O \in \tau\) such that either
\begin{equation*}
x \in O \text{ and } y \in X \setminus O
\end{equation*}
or
\begin{equation*}
x \in X \setminus O \text{ and } y \in O .
\end{equation*}
\end{definition}

\begin{remark}\label{r1.3.1}
Alexandroff and Hopf call \(T_0\)-topological spaces Kolmogorov spaces \cite[p.~58]{AH1935TBIGDMTTDK}.
\end{remark}

The next proposition gives us an equivalent reformulation of Definition~\ref{d1.3}.

\begin{proposition}\label{p2.1}
The following statements are equivalent for every topological space \((X, \tau)\):
\begin{enumerate}[label=\ensuremath{(\roman*)}]
\item\label{p2.1:s1} \((X, \tau)\) is a \(T_0\)-space.
\item\label{p2.1:s2} For any two distinct points \(x\), \(y \in X\) we have
\begin{equation*}
\cl_{\tau}(\{x\}) \neq \cl_{\tau}(\{y\}).
\end{equation*}
\end{enumerate}
\end{proposition}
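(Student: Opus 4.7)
The plan is to prove the equivalence by two contrapositive-friendly direct arguments, exploiting only the basic properties of the closure operator (in particular idempotence, \ref{t1.1:s3}) together with the definition of the $T_0$ axiom.

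For the implication \ref{p2.1:s1}$\Rightarrow$\ref{p2.1:s2}, I would take two distinct points $x,y\in X$ and use the $T_0$ axiom to produce an open set $O\in\tau$ separating them, say $x\in O$ and $y\notin O$. Then $X\setminus O$ is a closed set containing $y$ but not $x$, so $\cl_\tau(\{y\})\subseteq X\setminus O$, which gives $x\notin \cl_\tau(\{y\})$. Since $x\in\cl_\tau(\{x\})$ by \ref{t1.1:s2}, the two closures must differ. The symmetric case $y\in O$, $x\notin O$ is handled identically.

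For the reverse implication \ref{p2.1:s2}$\Rightarrow$\ref{p2.1:s1}, I would start from distinct $x,y\in X$ with $\cl_\tau(\{x\})\neq\cl_\tau(\{y\})$ and argue that at least one of the relations $x\notin\cl_\tau(\{y\})$ or $y\notin\cl_\tau(\{x\})$ must hold. This is the crux: if both $x\in\cl_\tau(\{y\})$ and $y\in\cl_\tau(\{x\})$ held simultaneously, then monotonicity of $\cl_\tau$ (which follows from \ref{t1.1:s4}) together with idempotence \ref{t1.1:s3} would give $\cl_\tau(\{x\})\subseteq\cl_\tau(\cl_\tau(\{y\}))=\cl_\tau(\{y\})$ and symmetrically $\cl_\tau(\{y\})\subseteq\cl_\tau(\{x\})$, contradicting $\cl_\tau(\{x\})\neq\cl_\tau(\{y\})$. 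Assuming, say, $x\notin\cl_\tau(\{y\})$, the open set $O:=X\setminus\cl_\tau(\{y\})$ contains $x$ but not $y$, which verifies the $T_0$ axiom.

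I do not foresee a genuine obstacle; the only delicate point is the argument that $\cl_\tau(\{x\})\neq\cl_\tau(\{y\})$ forces one of the points to lie outside the closure of the other, and this is handled cleanly by the short antisymmetry-style argument based on \ref{t1.1:s3}.
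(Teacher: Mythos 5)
Your proof is correct and complete: both directions are sound, and the key step in \ref{p2.1:s2}\(\Rightarrow\)\ref{p2.1:s1} — that \(\cl_\tau(\{x\})\neq\cl_\tau(\{y\})\) forces \(x\notin\cl_\tau(\{y\})\) or \(y\notin\cl_\tau(\{x\})\) via monotonicity and idempotence of the closure — is handled properly. The paper itself gives no proof, deferring to the cited reference \cite{Karno2003}; your argument is the standard one and fills that gap directly.
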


For the proof of Proposition~\ref{p2.1} see paper \cite{Karno2003}.

\begin{definition}\label{d1.3.1}
A topological space \((X, \tau)\) is called the Alexandroff space if the intersection of arbitrary family of open subsets of \((X, \tau)\) is open.
\end{definition}

The following proposition is a simple consequence of Definition~\ref{d1.3.1}.

\begin{proposition}\label{p1.3.2}
A topological space \((X, \tau)\) is an Alexandroff space if and only if the union of arbitrary family of closed subsets of \((X, \tau)\) is closed.
\end{proposition}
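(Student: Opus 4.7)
The plan is to deduce this purely from De Morgan's laws and the defining duality between open and closed sets (a set is closed exactly when its complement is open). Since the two conditions in question — ``arbitrary intersections of open sets are open'' and ``arbitrary unions of closed sets are closed'' — are related by complementation, the equivalence should follow at once.

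More concretely, I would argue both directions simultaneously as follows. Let \(\{F_i : i \in I\}\) be an arbitrary family of closed subsets of \((X, \tau)\) and set \(O_i := X \setminus F_i\) for each \(i \in I\). Then each \(O_i\) is open, and De Morgan's law gives
\begin{equation*}
X \setminus \bigcup_{i \in I} F_i = \bigcap_{i \in I} (X \setminus F_i) = \bigcap_{i \in I} O_i.
\end{equation*}
Hence \(\bigcup_{i \in I} F_i\) is closed if and only if \(\bigcap_{i \in I} O_i\) is open. Conversely, starting from an arbitrary family \(\{O_i : i \in I\}\) of open sets and setting \(F_i := X \setminus O_i\), the same identity shows that \(\bigcap_{i \in I} O_i\) is open iff \(\bigcup_{i \in I} F_i\) is closed. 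Since every family of open (resp.\ closed) sets arises in this way, Definition~\ref{d1.3.1} is equivalent to the stated condition.

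There is no real obstacle here; the only thing to be careful about is to phrase the correspondence \(F_i \leftrightarrow O_i\) so that one sees it exhausts \emph{all} families on both sides, which is what makes the quantifiers ``arbitrary family'' match up correctly in both directions of the equivalence.
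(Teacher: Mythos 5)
Your proof is correct, and it is exactly the argument the paper has in mind: the paper states Proposition~\ref{p1.3.2} as ``a simple consequence of Definition~\ref{d1.3.1}'' without writing out a proof, and the De Morgan complementation duality you give is the intended simple consequence. Nothing to add.
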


The next result seems to be know, but the authors cannot give a precise reference here.

\begin{proposition}\label{p2.3}
Let \((X, \tau)\) be a topological space. Then the equality
\begin{equation}\label{p2.3:e1}
\cl_{\tau}(A) = \bigcup_{x \in A} \cl_{\tau}(\{x\})
\end{equation}
holds for every nonempty \(A \subseteq X\) if and only if \((X, \tau)\) is an Alexandroff space.
\end{proposition}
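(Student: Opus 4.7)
The plan is to prove both implications by direct manipulation of the closure operator, using Proposition~\ref{p1.3.2} (Alexandroff $\iff$ arbitrary unions of closed sets are closed) to bridge the two sides.

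For the \emph{sufficiency} direction, assume \((X,\tau)\) is Alexandroff and let \(A \subseteq X\) be nonempty. The inclusion \(\bigcup_{x \in A} \cl_{\tau}(\{x\}) \subseteq \cl_{\tau}(A)\) is immediate from the monotonicity of \(\cl_{\tau}\), since \(\{x\} \subseteq A\) gives \(\cl_{\tau}(\{x\}) \subseteq \cl_{\tau}(A)\) for each \(x \in A\). For the reverse inclusion, observe that the set \(\bigcup_{x \in A} \cl_{\tau}(\{x\})\) is a union of closed sets and hence closed by Proposition~\ref{p1.3.2}; since it obviously contains \(A\) (because \(x \in \cl_{\tau}(\{x\})\) for every \(x\)), it must contain the smallest closed superset \(\cl_{\tau}(A)\).

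For the \emph{necessity} direction, assume that \eqref{p2.3:e1} holds for every nonempty \(A\), and let \(\{F_i : i \in I\}\) be an arbitrary family of closed subsets of \(X\). Set \(F := \bigcup_{i \in I} F_i\). If \(F = \varnothing\) it is trivially closed, so assume \(F \neq \varnothing\). Applying the hypothesis to \(A = F\) gives
\begin{equation*}
\cl_{\tau}(F) = \bigcup_{x \in F} \cl_{\tau}(\{x\}).
\end{equation*}
Every \(x \in F\) belongs to some \(F_{i(x)}\), and since \(F_{i(x)}\) is closed we have \(\cl_{\tau}(\{x\}) \subseteq F_{i(x)} \subseteq F\). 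Consequently \(\cl_{\tau}(F) \subseteq F\), so \(F\) is closed, and Proposition~\ref{p1.3.2} yields that \((X,\tau)\) is an Alexandroff space.

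Neither direction presents a serious obstacle; the only mild subtlety is recognising in the necessity part that one should apply the hypothesis directly to \(F = \bigcup_i F_i\) (rather than to the individual \(F_i\)) and then exploit the fact that each singleton closure is trapped inside some \(F_{i(x)}\) by closedness.
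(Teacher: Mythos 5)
Your proof is correct and follows essentially the same route as the paper: sufficiency via Proposition~\ref{p1.3.2} plus minimality of the closure, and necessity by showing an arbitrary union of closed sets equals its own closure. The only cosmetic difference is in the necessity step, where the paper applies \eqref{p2.3:e1} to each \(A_j\) and reindexes the union, while you apply it once to \(F\) and trap each \(\cl_{\tau}(\{x\})\) inside a closed \(F_{i(x)}\); both computations are interchangeable.
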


\begin{proof}
Suppose that \((X, \tau)\) is an Alexandroff space. Let \(A\) be a nonempty subset of \(X\). We must prove that~\eqref{p2.3:e1} holds. By Proposition~\ref{p1.3.2}, the union of arbitrary family of closed subsets of \((X, \tau)\) is closed. Hence, the set \(\cup_{x \in A} \cl_{\tau}(\{x\})\) is closed in \((X, \tau)\).

The closure \(\cl_{\tau}(A)\) is the smallest closed set containing the set \(A\). Thus,
\begin{equation}\label{p2.3:e2}
\cl_{\tau}(A) \subseteq \bigcup_{x \in A} \cl_{\tau}(\{x\})
\end{equation}
holds, because \(A \subseteq \cup_{x \in A} \cl_{\tau}(\{x\})\). Theorem~\ref{t1.1} gives us the inclusion
\[
\cl_{\tau}(\{x\}) \subseteq \cl_{\tau}(A)
\]
for every \(x \in A\). Consequently,
\begin{equation}\label{p2.3:e3}
\bigcup_{x \in A} \cl_{\tau}(\{x\}) \subseteq \cl_{\tau}(A)
\end{equation}
holds. Equality~\eqref{p2.3:e1} follows from \eqref{p2.3:e2} and \eqref{p2.3:e3}.

Let~\eqref{p2.3:e1} hold for every nonempty \(A \subseteq X\) and let \(\{A_j\}_{j \in J}\) be a family of nonempty closed subsets of \((X, \tau)\). Write
\[
B := \bigcup_{j \in J} A_j.
\]
Then \(A_j = \cl_{\tau}(A_j)\) for all \(j \in J\) and we have
\[
B = \bigcup_{j \in J} A_j = \bigcup_{j \in J} \cl_{\tau}(A_j) = \bigcup_{j \in J} \left(\bigcup_{x \in A_j} \cl_{\tau}(\{x\})\right) = \bigcup_{x \in B} \cl_{\tau}(\{x\}) = \cl_{\tau}(B).
\]
Hence, \(B\) is closed and, consequently, \((X, \tau)\) is an Alexandroff space by Proposition~\ref{p1.3.2}.
\end{proof}

Proposition~\ref{p2.3} and Remark~\ref{r1.2} imply the following corollary.

\begin{corollary}\label{c2.4}
Let \((X, \tau_1)\) and \((X, \tau_2)\) be Alexandroff spaces. Then the topologies \(\tau_1\) and \(\tau_2\) coincide if and only if the equality
\[
\cl_{\tau_1}(\{x\}) = \cl_{\tau_2}(\{x\})
\]
holds for each \(x \in X\).
\end{corollary}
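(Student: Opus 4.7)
The forward direction is immediate: if $\tau_1=\tau_2$, then the two topologies have literally the same closure operator on $2^X$, so in particular $\cl_{\tau_1}(\{x\})=\cl_{\tau_2}(\{x\})$ for every $x\in X$. So the substance is in the reverse implication, and the plan is to reduce everything to an equality of closure operators and then invoke uniqueness from Remark~\ref{r1.2}.

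Assume $\cl_{\tau_1}(\{x\}) = \cl_{\tau_2}(\{x\})$ for every $x \in X$. The key step is to upgrade this pointwise agreement into agreement on arbitrary subsets, and this is precisely what Proposition~\ref{p2.3} is built for: since both $(X,\tau_1)$ and $(X,\tau_2)$ are Alexandroff spaces, formula~\eqref{p2.3:e1} applies to each of them. So for any nonempty $A \subseteq X$ I would write
\[
\cl_{\tau_1}(A) \;=\; \bigcup_{x \in A} \cl_{\tau_1}(\{x\}) \;=\; \bigcup_{x \in A} \cl_{\tau_2}(\{x\}) \;=\; \cl_{\tau_2}(A),
\]
using Proposition~\ref{p2.3} at the outer two equalities and the hypothesis at the middle one. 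The case $A = \varnothing$ is handled separately by axiom \ref{t1.1:s1}, which gives $\cl_{\tau_1}(\varnothing) = \varnothing = \cl_{\tau_2}(\varnothing)$.

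At this point $\cl_{\tau_1}$ and $\cl_{\tau_2}$ coincide as mappings $2^X \to 2^X$, and Remark~\ref{r1.2} tells us that a topology is uniquely determined by its closure operator; hence $\tau_1 = \tau_2$, completing the argument. There is no real obstacle here — the only thing to watch is that Proposition~\ref{p2.3} is stated for nonempty $A$, so the empty set must be treated on its own, and that the Alexandroff hypothesis must be used for both topologies, not just one.
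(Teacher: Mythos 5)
Your proof is correct and is exactly the argument the paper intends: the paper derives this corollary by citing Proposition~\ref{p2.3} (to upgrade pointwise agreement of singleton closures to agreement of the full closure operators) together with Remark~\ref{r1.2} (uniqueness of the topology determined by a closure operator). Your write-up simply fills in the details of that same route, including the correct observation that the empty set needs separate (trivial) treatment.
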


Let us recall now the concept of quasi-metric spaces. The following definition is a special case of Definition~6.1.1 from~\cite{Gou2013NHTADTSTIP}.

\begin{definition}\label{d1.4}
Let \(X\) be a set. A mapping \(d \colon X \times X \to \mathbb{R}^+\) is said to be a quasi-metric on \(X\) if the following conditions hold:
\begin{enumerate}[label=\ensuremath{(c_\arabic*)}]
\item\label{d1.4:c1} \(d(x, x) = 0\) for every \(x \in X\);
\item\label{d1.4:c2} (Triangle inequality)
\[
d(x, y) \leqslant d(x, z) + d(z, y)
\]
for all \(x\), \(y\), \(z \in X\);
\item\label{d1.4:c3} \(d(x, y) = d(y, x) = 0\) implies \(x = y\) for all \(x\), \(y \in X\).
\end{enumerate}
A set \(X\) with a quasi-metric \(d \colon X \times X \to \mathbb{R}^+\) is said to be a quasi-metric space and the quantity \(d(x, y)\) is called the distance from \(x\) to \(y\).
\end{definition}

In general, the quasi-metrics are not symmetric that allows us to distinguish between the distances \(d(x,y)\) and \(d(y, x)\).

Similarly to the usual metric spaces we can define the open balls in all quasi-metric spaces.

\begin{definition}\label{d1.6}
Let \(d\) be a quasi-metric on a set \(X\). The open ball with center \(x \in X\) and radius \(r \in (0, \infty)\) is the set of all points \(y \in X\) such that \(d(x, y) < r\).
\end{definition}

The open ball topology \(O^d\) on \(X\) is, by definition, the topology generated by the set of all open balls of the quasi-metric space \((X, d)\). The set of all open balls of a quasi-metric space \((X, d)\) forms a base of the topology \(O^d\).

\begin{definition}\label{d1.6.1}
A topological space \((X, \tau)\) is quasi-metrizable if there is a quasi-metric \(d \colon X \times X \to \mathbb{R}^+\) such that \(\tau = O^d\).
\end{definition}

The following proposition directly follows from Lemma~6.1.9 of~\cite{Gou2013NHTADTSTIP}.

\begin{proposition}\label{p1.7}
Let \((X, \tau)\) be a quasi-metrizable topological space. Then \((X, \tau)\) is a \(T_0\)-space.
\end{proposition}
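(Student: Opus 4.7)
The plan is to verify Definition~\ref{d1.3} directly from the quasi-metric structure. Fix a quasi-metric \(d\) on \(X\) with \(\tau = O^d\), and let \(x, y \in X\) be any two distinct points. Condition~\ref{d1.4:c3} in Definition~\ref{d1.4} forces at least one of \(d(x, y)\) and \(d(y, x)\) to be strictly positive, for otherwise \(x = y\). The definition of the \(T_0\)-axiom is symmetric in \(x\) and \(y\), so I may assume without loss of generality that \(d(x, y) > 0\) and set \(r := d(x, y)\).

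Next I would introduce the open ball
\[
O := \{ z \in X : d(x, z) < r \}
\]
with center \(x\) and radius \(r\). By the remark following Definition~\ref{d1.6}, open balls form a base of \(O^d\), and in particular \(O \in \tau\). Using condition~\ref{d1.4:c1} of Definition~\ref{d1.4}, we have \(d(x, x) = 0 < r\), so \(x \in O\); on the other hand \(d(x, y) = r\), hence \(y \notin O\). This exhibits an open set \(O\) with \(x \in O\) and \(y \in X \setminus O\), which is exactly one of the two disjunctions in Definition~\ref{d1.3}. The symmetric case \(d(y, x) > 0\) is dispatched identically by centering the ball at \(y\) instead.

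I do not anticipate a genuine obstacle: the proof is essentially a translation of axiom~\ref{d1.4:c3} into the topological language of separating neighborhoods. The only conceptual point worth stating explicitly is that the base property of the open balls ensures \(O \in \tau\), so that the verification of Definition~\ref{d1.3} is literally carried out in the given topology rather than only in the base.
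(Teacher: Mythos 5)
Your proof is correct. The paper itself does not argue the statement at all: it simply records that the proposition ``directly follows from Lemma~6.1.9 of~\cite{Gou2013NHTADTSTIP}''. What you have written is, in effect, a self-contained proof of that cited lemma, and every step checks out: condition~\ref{d1.4:c3} of Definition~\ref{d1.4} guarantees that for distinct \(x\), \(y\) at least one of \(d(x,y)\), \(d(y,x)\) is positive; the ball \(O = \{z \in X : d(x,z) < r\}\) with \(r = d(x,y)\) belongs to \(\tau = O^d\) because the paper takes the open balls as a base (and indeed the triangle inequality \ref{d1.4:c2} is what makes them a base rather than merely a subbase, which is the one point where something could have gone wrong in a quasi-metric setting); and \(d(x,x)=0<r\) while \(d(x,y)=r\not<r\) give \(x\in O\), \(y\notin O\), which is exactly Definition~\ref{d1.3}. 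The trade-off is the usual one: the paper's citation keeps the exposition short and defers to a standard reference, while your argument makes the paper self-contained and makes visible precisely which axioms of Definition~\ref{d1.4} are used (only \ref{d1.4:c1}--\ref{d1.4:c3}, with \ref{d1.4:c3} carrying the separation). Either is acceptable; yours adds a small amount of length but no risk.
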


The closure of subsets of quasi-metric spaces can be described as follows.

\begin{proposition}\label{p1.8}
Let \(A\) be a nonempty subset of a quasi-metric space \((X, d)\), let \(x \in X\), and let
\begin{equation}\label{p1.8:e1}
d(x, A) := \inf_{a \in A} d(x, a).
\end{equation}
Then the closure of the set \(A\) in the topological space \((X, O^d)\) coincides with the set of all points \(x\) of \(X\) which satisfy the equality
\begin{equation}\label{p1.8:e2}
d(x, A) = 0.
\end{equation}
\end{proposition}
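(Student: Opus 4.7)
The plan is to characterize membership in the closure $\cl_{O^d}(A)$ through basic open neighborhoods at $x$, and then translate the resulting intersection condition into a statement about the infimum $d(x, A)$.

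First, I would recall the standard topological criterion: for any topology $\tau$, a point $x$ lies in $\cl_{\tau}(A)$ if and only if $U \cap A \neq \varnothing$ for every open set $U \in \tau$ containing $x$. Since the excerpt states that the open balls form a base of $O^d$, it suffices to test this condition against basic open neighborhoods of $x$.

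The key intermediate step is to show that the balls $B(x, r) = \{y \in X : d(x, y) < r\}$ with $r > 0$ form a \emph{local} base at $x$, even though the ambient base consists of balls with arbitrary centers. Given a basic open ball $B(y, r)$ with $x \in B(y, r)$, we have $d(y, x) < r$, so I would set $s := r - d(y, x) > 0$ and verify, using the triangle inequality from condition \ref{d1.4:c2}, that $B(x, s) \subseteq B(y, r)$: indeed, if $d(x, z) < s$, then $d(y, z) \leqslant d(y, x) + d(x, z) < d(y, x) + s = r$. This is the only step where the quasi-metric axioms beyond non-negativity are really used, and asymmetry is no obstacle because the chain of inequalities goes from $y$ to $z$ via $x$.

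Consequently, $x \in \cl_{O^d}(A)$ if and only if $B(x, r) \cap A \neq \varnothing$ for every $r > 0$, i.e., for every $r > 0$ there exists $a \in A$ with $d(x, a) < r$. By the definition of infimum in \eqref{p1.8:e1}, this is equivalent to $d(x, A) = \inf_{a \in A} d(x, a) = 0$, which is \eqref{p1.8:e2}. No step is a serious obstacle; the only point requiring care is the verification that balls centered at $x$ form a local base, since the quasi-metric is not assumed symmetric.
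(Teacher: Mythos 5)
Your proof is correct. The paper does not prove Proposition~\ref{p1.8} itself but only cites Lemma~6.1.11 of~\cite{Gou2013NHTADTSTIP}; your argument is the standard one that reference supplies, and you correctly isolate the one point needing care in the asymmetric setting, namely that the triangle inequality \(d(y,z) \leqslant d(y,x) + d(x,z)\) runs in exactly the direction needed to show the balls centered at \(x\) form a local base at \(x\).
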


For the proof see Lemma~6.1.11 of~\cite{Gou2013NHTADTSTIP}.

\begin{corollary}\label{c1.9}
Let \(p\) be a point of a quasi-metric space \((X, d)\). Then the equality
\begin{equation}\label{c1.9:e1}
\cl_{O^d} (\{p\}) = \{x \in X \colon d(x, p) = 0\}
\end{equation}
holds.
\end{corollary}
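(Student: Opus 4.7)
The plan is to apply Proposition~\ref{p1.8} directly to the singleton $A = \{p\}$, since $\{p\}$ is certainly a nonempty subset of $X$ and thus falls within the scope of that proposition. Because $A$ consists of the single element $p$, the infimum defining $d(x, A)$ in~\eqref{p1.8:e1} is taken over a one-point index set and collapses to
\[
d(x, \{p\}) = \inf_{a \in \{p\}} d(x, a) = d(x, p).
\]
Hence the condition $d(x, A) = 0$ appearing in Proposition~\ref{p1.8} becomes exactly $d(x, p) = 0$, and substituting into the characterization supplied by Proposition~\ref{p1.8} gives \eqref{c1.9:e1}.

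There is no genuine obstacle here: the corollary is a direct specialization of Proposition~\ref{p1.8} to singletons, recorded separately because singleton closures play a distinguished role in the subsequent development (cf.\ Proposition~\ref{p2.1} and the \(T_0\)-Alexandroff results cited in the introduction). The only thing worth noting, if one wished to emphasize it, is that the asymmetry of $d$ forces the argument $x$ to appear in the first slot and $p$ in the second slot of $d(x, p)$, in accordance with the convention fixed in Definition~\ref{d1.6} and used throughout~\eqref{p1.8:e1}.
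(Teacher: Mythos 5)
Your proof is correct and is exactly the paper's argument: both apply Proposition~\ref{p1.8} with \(A = \{p\}\) and observe that the infimum in~\eqref{p1.8:e1} over a singleton reduces to \(d(x,p)\). Nothing more is needed.
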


\begin{proof}
It follows from~\eqref{p1.8:e1} and \eqref{p1.8:e2} with \(A = \{p\}\).
\end{proof}

Following~\cite[p.~46]{DD2016EOD}, we say that a metric space \((X, d)\) is equidistant if there exists \(t > 0\) such that \(d(x, y) = t\) for all distinct \(x\), \(y \in X\). By analogy with equidistant metrics we can introduce the concept of equidistant quasi-metrics.

\begin{definition}\label{d1.10}
Let \((X, d)\) be a quasi-metric space. The quasi-metric \(d\) is equidistant if there exists \(t > 0\) such that \(d(x, y) = t\) whenever \(d(x, y) \neq 0\).
\end{definition}

It is easy to see that a quasi-metric \(d \colon X \times X \to \mathbb{R}^+\) is equidistant if and only if the mapping
\[
X \times X \ni (x, y) \mapsto \max\{d(x, y), d(y, x)\}
\]
is an equidistant metric on \(X\).

\begin{remark}\label{r1.11}
Some results closely connected with equidistant metrics and their pseudometric generalization can be found in~\cite{Dov2025HDBUB, BD2023PSFMTMITGO, BD2023WAPACS, BD2024OMOMPF, DL2020CCOP}.
\end{remark}

\section{The closure of points in \(T_0\)-spaces and Alexandroff \(T_0\)-spaces}

In what follows we denote by \(\boldsymbol{\tau}_0(X)\) the set of all \(T_0\)-topologies on a given set \(X\).

Let \(X\) be an arbitrary set. The following theorem gives us necessary and sufficient conditions under which the mapping \(X \rightarrow 2^X\) can be extended to a closure operator \(2^X \rightarrow 2^X\) in a \(T_0\)-space \((X, \tau)\).

\begin{theorem}\label{t3.1}
Let \(X\) be a nonempty set and let \(\cl_1\colon X \rightarrow 2^X\) be a mapping. Then the following statements are equivalent:
\begin{enumerate}
\item \label{t3.1:s1} There exists \(\tau \in \boldsymbol{\tau}_0(X)\) such that the equality
\begin{equation}\label{t3.1:e1}
\cl_{\tau}(\{x\}) = \cl_1(x)
\end{equation}
holds for each \(x \in X\).

\item \label{t3.1:s2} The mapping \(\cl_1\colon X \rightarrow 2^X\) is injective, and the membership relation
\begin{equation}\label{t3.1:e2}
x \in \cl_1(x)
\end{equation}
is valid for each \(x \in X\), and the inclusion
\begin{equation}\label{t3.1:e3}
\cl_1(y) \subseteq \cl_1(x)
\end{equation}
holds for every \(x \in X\) and every \(y \in \cl_1(x)\).

\item \label{t3.1:s3} There exists the unique Alexandroff topology \(\tau \in \boldsymbol{\tau}_0(X)\) such that~\eqref{t3.1:e1} holds for each \(x \in X\).
\end{enumerate}
\end{theorem}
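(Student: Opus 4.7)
The plan is to prove the chain of implications $\ref{t3.1:s3} \Rightarrow \ref{t3.1:s1} \Rightarrow \ref{t3.1:s2} \Rightarrow \ref{t3.1:s3}$. The implication $\ref{t3.1:s3} \Rightarrow \ref{t3.1:s1}$ is immediate since \ref{t3.1:s3} strengthens \ref{t3.1:s1}, so the substance lies in the remaining two implications.

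For $\ref{t3.1:s1} \Rightarrow \ref{t3.1:s2}$, assume $\tau \in \boldsymbol{\tau}_0(X)$ witnesses~\eqref{t3.1:e1}. Injectivity of $\cl_1$ is Proposition~\ref{p2.1} applied to $\tau$. The relation $x \in \cl_1(x)$ is axiom \ref{t1.1:s2} with $A = \{x\}$. If $y \in \cl_1(x) = \cl_{\tau}(\{x\})$, then $\{y\} \subseteq \cl_{\tau}(\{x\})$, so applying $\cl_{\tau}$ and using \ref{t1.1:s3} yields $\cl_1(y) = \cl_{\tau}(\{y\}) \subseteq \cl_{\tau}(\cl_{\tau}(\{x\})) = \cl_{\tau}(\{x\}) = \cl_1(x)$, which is~\eqref{t3.1:e3}.

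For $\ref{t3.1:s2} \Rightarrow \ref{t3.1:s3}$, the main move is to extend $\cl_1$ to a Kuratowski closure on $2^X$ by setting
\[
\cl(\varnothing) := \varnothing, \qquad \cl(A) := \bigcup_{x \in A} \cl_1(x) \text{ for nonempty } A \subseteq X,
\]
and to verify conditions \ref{t1.1:s1}--\ref{t1.1:s4}. Axioms \ref{t1.1:s1}, \ref{t1.1:s2} and \ref{t1.1:s4} follow routinely from the definition and from~\eqref{t3.1:e2}. The key step—and the one I expect to be the main obstacle—is idempotence \ref{t1.1:s3}: here is where~\eqref{t3.1:e3} is used essentially. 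If $y \in \cl(A)$, then $y \in \cl_1(x)$ for some $x \in A$, so~\eqref{t3.1:e3} gives $\cl_1(y) \subseteq \cl_1(x) \subseteq \cl(A)$; taking the union over $y \in \cl(A)$ yields $\cl(\cl(A)) \subseteq \cl(A)$, and the reverse inclusion is already in \ref{t1.1:s2}.

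Theorem~\ref{t1.1} then produces a topology $\tau$ on $X$ with $\cl_{\tau} = \cl$; in particular $\cl_{\tau}(\{x\}) = \cl_1(x)$, so~\eqref{t3.1:e1} holds. Since $\cl_{\tau}(A) = \bigcup_{x\in A}\cl_{\tau}(\{x\})$ for every nonempty $A$ by construction, Proposition~\ref{p2.3} shows that $(X,\tau)$ is Alexandroff, and Proposition~\ref{p2.1} combined with the injectivity of $\cl_1$ shows that $(X,\tau)$ is $T_0$. Uniqueness among Alexandroff $T_0$-topologies with the prescribed point-closures is then immediate from Corollary~\ref{c2.4}. This completes the cycle and establishes the equivalence of the three statements.
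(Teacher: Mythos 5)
Your proposal is correct and follows essentially the same route as the paper: the same cycle of implications, the same use of Proposition~\ref{p2.1} and the axioms of Theorem~\ref{t1.1} for \ref{t3.1:s1}\(\Rightarrow\)\ref{t3.1:s2}, and the same extension of \(\cl_1\) to a Kuratowski closure via unions, with idempotence as the key step resting on~\eqref{t3.1:e3}, followed by Propositions~\ref{p2.3}, \ref{p2.1} and Corollary~\ref{c2.4}. No gaps.
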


\begin{proof}
\(\ref{t3.1:s1} \Rightarrow \ref{t3.1:s2}\). Let \ref{t3.1:s1} hold. Equality~\eqref{t3.1:e1} and Proposition~\ref{p2.1} imply that the mapping \(\cl_1\colon X \rightarrow 2^X\) is injective. Moreover, condition~\ref{t1.1:s2} of Theorem~\ref{t1.1} implies that~\eqref{t3.1:e2} holds for each \(x \in X\).

Let us consider now arbitrary points \(x \in X\) and
\begin{equation}\label{t3.1:e4}
y \in \cl_1(x).
\end{equation}
Relation~\eqref{t3.1:e4} and equality~\eqref{t3.1:e1} give us the inclusion
\begin{equation}\label{t3.1:e5}
\{y\} \subseteq \cl_1(x) = \cl_{\tau}(\{x\}).
\end{equation}
Condition~\ref{t1.1:s4} of Theorem~\ref{t1.1} with \(\cl = \cl_{\tau}\) implies
\begin{equation}\label{t3.1:e6}
\cl_{\tau}(A) \subseteq \cl_{\tau}(B)
\end{equation}
whenever \(A \subseteq B\). Using~\eqref{t3.1:e6} with \(A = \{y\}\) and \(B = \cl_{\tau}(\{x\})\) we obtain
\[
\cl_{\tau}(\{y\}) \subseteq \cl_{\tau}(\cl_{\tau}(\{x\})).
\]
The last inclusion and condition~\ref{t1.1:s3} of Theorem~\ref{t1.1} imply
\begin{equation}\label{t3.1:e7}
\cl_{\tau}(\{y\}) \subseteq \cl_{\tau}(\{x\}).
\end{equation}
Now~\eqref{t3.1:e3} follows from statement~\ref{t3.1:s1} and inclusion~\eqref{t3.1:e7}. Thus, the implication \(\ref{t3.1:s1} \Rightarrow \ref{t3.1:s2}\) is valid.

\(\ref{t3.1:s2} \Rightarrow \ref{t3.1:s3}\). Let \ref{t3.1:s2} hold. For every \(A \subseteq X\) we define a mapping \(\cl \colon 2^X \to 2^X\) as
\begin{equation}\label{t3.1:e8}
\cl(A) := \begin{cases}
A, & \text{if } A = \varnothing,\\
\bigcup_{x \in A} \cl_1(x), & \text{otherwise}.
\end{cases}
\end{equation}
We claim that there is a topology \(\tau\) on \(X\) such that
\begin{equation}\label{t3.1:e8.1}
\cl_{\tau}(A) = \cl(A)
\end{equation}
for every \(A \subseteq X\). To prove this claim we will use Theorem~\ref{t1.1}.

First of all we note that conditions~\ref{t1.1:s1} and \ref{t1.1:s4} of Theorem~\ref{t1.1} directly follow from~\eqref{t3.1:e8}. In addition, using~\eqref{t3.1:e2} and \eqref{t3.1:e8} we obtain the inclusion
\[
A \subseteq \cl(A)
\]
for every \(A \subseteq X\). Thus, condition~\ref{t1.1:s2} of Theorem~\ref{t1.1} is also satisfied.

It remains to check that condition~\ref{t1.1:s3} of Theorem~\ref{t1.1} is satisfied. To do this, we need to prove the equality
\begin{equation}\label{t3.1:e9}
\cl(\cl(A)) = \cl(A)
\end{equation}
for arbitrary \(A \subseteq X\).

If \(A\) is empty subset of \(X\), then \eqref{t3.1:e9} directly follows from the equality \(\cl(\varnothing) = \varnothing\) which was proved above.

Let us consider the case when \(A \neq \varnothing\). From \eqref{t3.1:e3} and \eqref{t3.1:e8} it follows that for each \(y \in \cl(A)\) there exists \(x_y \in A\) such that \(y \in \cl_1(x_y)\) and \(\cl_1(y) \subseteq \cl_1(x_y)\). Hence, we obtain the inclusion \(\cl_1(y) \subseteq \cl(A)\) for each \(y \in \cl(A)\). Since \(A \subseteq \cl(A)\), we have
\begin{equation*}
\begin{aligned}
\cl(A)  & = \bigcup_{x \in A} \cl_1(x) \subseteq \bigcup_{x \in \cl(A)} \cl_1(x) \\
& = \cl(\cl(A)) = \bigcup_{y \in \cl(A)} \cl_1(y) \subseteq \bigcup_{y \in \cl(A)} \cl(A) = \cl(A).
\end{aligned}
\end{equation*}
Equality~\eqref{t3.1:e9} follows for arbitrary \(A \subseteq X\).

Consequently, by Theorem~\ref{t1.1}, there exists a topology \(\tau\) on \(X\) such that~\eqref{t3.1:e8.1} holds for each \(A \subseteq X\).

Let us prove that \(\tau \in \boldsymbol{\tau}_0(X)\). Using equality~\eqref{t3.1:e8} for \(A = \{x\}\) it is easy to see that~\eqref{t3.1:e1} holds for every \(x \in X\). Hence, the membership relation
\[
\tau \in \boldsymbol{\tau}_0(X)
\]
follows from~\eqref{t3.1:e1} by Proposition~\ref{p2.1} because \(\cl_1\colon X \rightarrow 2^X\) is injective by statement~\ref{t3.1:s2}.

Equalities~\eqref{t3.1:e8}, \eqref{t3.1:e8.1} and Proposition~\ref{p2.3} imply that \((X, \tau)\) is an Alexandroff space. The uniqueness of Alexandroff topology in the set of all \(T_0\)-topologies which satisfy~\eqref{t3.1:e1} for each \(x \in X\) follows from~Corollary~\ref{c2.4}.

\(\ref{t3.1:s3} \Rightarrow \ref{t3.1:s1}\). This implication is evidently valid.

The proof is completed.
\end{proof}

Let \(\tau_1\) and \(\tau_2\) be topologies on a set \(X\). If the inclusion \(\tau_1 \subseteq \tau_2\) holds, then the topology \(\tau_2\) is said to be larger (or finer) than \(\tau_1\).

\begin{proposition}\label{p3.2}
Let \(X\) be a nonempty set, \((X, \tau^A)\) be an Alexandroff topological space and let \(\tau \in \boldsymbol{\tau}_{0}(X)\). If the equality
\begin{equation}\label{p3.2:e1}
\cl_{\tau}(\{x\}) = \cl_{\tau^A}(\{x\})
\end{equation}
holds for every \(x \in X\), then \(\tau^A\) is finer than \(\tau\),
\begin{equation}\label{p3.2:e2}
\tau \subseteq \tau^A.
\end{equation}
\end{proposition}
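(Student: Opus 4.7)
The plan is to show the dual inclusion on closed sets: every $\tau$-closed subset of $X$ is also $\tau^A$-closed. This is equivalent to $\tau \subseteq \tau^A$ by passing to complements. So let $F$ be an arbitrary $\tau$-closed subset of $X$; I want to prove $\cl_{\tau^A}(F) = F$. The case $F = \varnothing$ is immediate, so assume $F \neq \varnothing$.

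The key move is to exploit that $\tau^A$ is Alexandroff. By Proposition~\ref{p2.3} applied to $(X, \tau^A)$,
\[
\cl_{\tau^A}(F) = \bigcup_{x \in F} \cl_{\tau^A}(\{x\}).
\]
Substituting the hypothesis \eqref{p3.2:e1} on the right-hand side gives
\[
\cl_{\tau^A}(F) = \bigcup_{x \in F} \cl_{\tau}(\{x\}).
\]
Now for every $x \in F$, monotonicity of $\cl_{\tau}$ (which follows from condition~\ref{t1.1:s4} of Theorem~\ref{t1.1}, as was already observed in the proof of Theorem~\ref{t3.1} at~\eqref{t3.1:e6}) yields $\cl_{\tau}(\{x\}) \subseteq \cl_{\tau}(F)$, and since $F$ is $\tau$-closed we have $\cl_{\tau}(F) = F$. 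Hence $\cl_{\tau^A}(F) \subseteq F$, and combined with the always-valid inclusion $F \subseteq \cl_{\tau^A}(F)$ from~\ref{t1.1:s2}, we conclude $F = \cl_{\tau^A}(F)$.

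Thus every $\tau$-closed set is $\tau^A$-closed, which gives \eqref{p3.2:e2}. There is no real obstacle in this argument: the whole proof rests on the fact that an Alexandroff closure is completely determined by the closures of points (Proposition~\ref{p2.3}), together with the hypothesis matching those pointwise closures to those of $\tau$. Note in particular that no Alexandroff assumption on $\tau$ is needed, and the $T_0$ assumption on $\tau$ is not used either; only $\tau^A$ being Alexandroff is essential.
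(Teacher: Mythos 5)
Your proof is correct and follows essentially the same route as the paper's: reduce to showing every $\tau$-closed set is $\tau^A$-closed, apply Proposition~\ref{p2.3} to $(X,\tau^A)$, substitute the pointwise hypothesis, and use monotonicity of $\cl_{\tau}$ together with $\tau$-closedness. Your closing observations (explicit handling of $F=\varnothing$, and that neither the $T_0$ nor any Alexandroff assumption on $\tau$ is actually used) are accurate but do not change the substance of the argument.
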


\begin{proof}
Let \eqref{p3.2:e1} hold for every \(x \in X\). We must prove that inclusion~\eqref{p3.2:e2} holds.

To prove inclusion~~\eqref{p3.2:e2} it suffices to show that every closed in \((X, \tau)\) subset of \(X\) is also closed in the space \((X, \tau^{A})\),
\begin{equation}\label{p3.2:e3}
\cl_{\tau^A}(S) = S.
\end{equation}

Let \(S\) be an arbitrary closed in \((X, \tau)\) subset of \(X\),
\begin{equation}\label{p3.2:e4}
\cl_{\tau}(S) = S.
\end{equation}
Then condition~\ref{t1.1:s4} of Theorem~\ref{t1.1} implies the inclusion
\begin{equation*}
\cl_{\tau}(\{x\}) \subseteq \cl_{\tau}(S)
\end{equation*}
for every \(x \in S\). Now using equality~\eqref{p3.2:e1} and equality~\eqref{p2.3:e1} with \(A = S\) and \(\tau = \tau^A\) we obtain
\begin{equation}\label{p3.2:e6}
\cl_{\tau^A}(S) = \bigcup_{x \in S} \cl_{\tau^A}(\{x\}) = \bigcup_{x \in S} \cl_{\tau}(\{x\}).
\end{equation}
Inclusion \(\cl_{\tau}(\{x\}) \subseteq \cl_{\tau}(S)\), \(x \in S\), and equalities~\eqref{p3.2:e4} and \eqref{p3.2:e6} give us
\begin{equation}\label{p3.2:e7}
\cl_{\tau^A}(S) \subseteq S.
\end{equation}
The converse inclusion follows from condition~\ref{t1.1:s2} of Theorem~\ref{t1.1} with \(\cl = \cl_{\tau^A}\). Equality \eqref{p3.2:e3} follows.
\end{proof}

The next theorem is the second main result of the paper.

\begin{theorem}\label{t3.4}
Let \((X, \tau)\) be a topological space. Then the following statements are equivalent:
\begin{enumerate}
\item \label{t3.4:s1} The space \((X, \tau)\) is quasi-metrizable by equidistant quasi-metric \(d \colon X \times X \to \mathbb{R}^{+}\).
\item \label{t3.4:s2} The space \((X, \tau)\) is a \(T_0\)-Alexandroff space.
\end{enumerate}
\end{theorem}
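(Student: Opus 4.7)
The plan is to prove the two implications separately, with the converse direction \ref{t3.4:s2} $\Rightarrow$ \ref{t3.4:s1} carrying almost all the content.

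For \ref{t3.4:s1} $\Rightarrow$ \ref{t3.4:s2}, the $T_0$ property is Proposition~\ref{p1.7}. For the Alexandroff property, the key observation is that when $d$ takes only the two values $0$ and $t$, the infimum $\inf_{a\in A}d(x,a)$ equals $0$ if and only if $d(x,a)=0$ for some $a\in A$. Combining this with Proposition~\ref{p1.8} and Corollary~\ref{c1.9}, one gets
\[
\cl_{\tau}(A)=\bigcup_{a\in A}\cl_{\tau}(\{a\})
\]
for every nonempty $A\subseteq X$, and Proposition~\ref{p2.3} then yields the Alexandroff property.

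For the converse \ref{t3.4:s2} $\Rightarrow$ \ref{t3.4:s1}, I would construct $d$ explicitly. Pick any $t>0$ (say $t=1$) and set
\[
d(x,y):=\begin{cases}0,&\text{if }x\in\cl_{\tau}(\{y\}),\\ 1,&\text{otherwise.}\end{cases}
\]
The orientation is chosen so that Corollary~\ref{c1.9} recovers $\cl_{\tau}(\{p\})$. Condition~\ref{d1.4:c1} follows from $x\in\cl_{\tau}(\{x\})$, which in turn follows from \ref{t1.1:s2}. Condition~\ref{d1.4:c3} is exactly the contrapositive of Proposition~\ref{p2.1}: if $d(x,y)=d(y,x)=0$, then the inclusions $\{x\}\subseteq\cl_{\tau}(\{y\})$ and $\{y\}\subseteq\cl_{\tau}(\{x\})$ together with idempotency \ref{t1.1:s3} give $\cl_{\tau}(\{x\})=\cl_{\tau}(\{y\})$, whence $x=y$. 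The triangle inequality \ref{d1.4:c2} can fail only when $d(x,y)=1$ while $d(x,z)=d(z,y)=0$; but $z\in\cl_{\tau}(\{y\})$ implies $\cl_{\tau}(\{z\})\subseteq\cl_{\tau}(\cl_{\tau}(\{y\}))=\cl_{\tau}(\{y\})$, so $x\in\cl_{\tau}(\{z\})\subseteq\cl_{\tau}(\{y\})$, contradicting $d(x,y)=1$.

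It remains to prove that $\tau=O^d$. By construction and Corollary~\ref{c1.9},
\[
\cl_{O^d}(\{p\})=\{x\in X:d(x,p)=0\}=\cl_{\tau}(\{p\})
\]
for every $p\in X$. Since $(X,\tau)$ is a $T_0$-Alexandroff space by hypothesis, and $(X,O^d)$ is a $T_0$-Alexandroff space by the already-proved implication \ref{t3.4:s1} $\Rightarrow$ \ref{t3.4:s2}, Corollary~\ref{c2.4} forces $\tau=O^d$. The main obstacle is conceptual rather than technical: one must choose the orientation of $d$ correctly so that the asymmetric formula in Corollary~\ref{c1.9} reproduces $\cl_{\tau}(\{p\})$; once that orientation is fixed, the quasi-metric axioms reduce to one-line applications of \ref{t1.1:s2}, \ref{t1.1:s3}, and Proposition~\ref{p2.1}.
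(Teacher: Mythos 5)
Your proposal is correct and follows essentially the same route as the paper: the same two-valued quasi-metric \(d(x,y)=0\) iff \(x\in\cl_{\tau}(\{y\})\), the same verification of the quasi-metric axioms via \ref{t1.1:s2}, \ref{t1.1:s3} and Proposition~\ref{p2.1}, the identification \(\tau=O^d\) via Corollary~\ref{c1.9} and Corollary~\ref{c2.4}, and the same observation that for an equidistant quasi-metric the infimum \(\inf_{a\in A}d(x,a)\) vanishes only if some individual distance vanishes, which yields the Alexandroff property through Propositions~\ref{p1.8} and~\ref{p2.3}. No gaps.
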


\begin{proof}
\(\ref{t3.4:s1} \Rightarrow \ref{t3.4:s2}\). Let \ref{t3.4:s1} hold. Then there is an equidistant quasi-metric \(d \colon X \times X \to \mathbb{R}^{+}\) such that \(\tau = O^d\). The topological space \((X, O^d)\) is a \(T_0\)-space by Proposition~\ref{p1.7}. Hence, \((X, \tau)\) is also a \(T_0\)-space. Thus, to prove statement~\ref{t3.4:s2} it suffices to show that \((X, \tau)\) is an Alexandroff space.

Proposition~\ref{p2.3} implies that \((X, \tau)\) is an Alexandroff space if the equality
\begin{equation*}
\cl_{\tau}(A) = \bigcup_{a \in A} \cl_{\tau}(\{a\})
\end{equation*}
holds for every nonempty \(A \subseteq X\).

Let us consider an arbitrary nonempty \(A \subseteq X\). The equality \(\tau = O^d\) and Proposition~\ref{p1.8} give us the equality
\[
\cl_{\tau}(A) = \left\{x \in X \colon \inf_{a \in A} d(x, A) = 0\right\}.
\]
Thus, we must prove the equality
\begin{equation}\label{t3.4:e2}
\bigcup_{a \in A} \cl_{\tau}(\{a\}) = \left\{x \in X \colon \inf_{a \in A} d(x, A) = 0\right\}.
\end{equation}
To prove equality~\eqref{t3.4:e2} we first note that Corollary~\ref{c1.9} and the equality \(\tau = O^d\) imply
\begin{equation}\label{t3.4:e3}
\cl_{\tau}(\{a\}) = \left\{x \in X \colon d(x, A) = 0\right\}
\end{equation}
for every \(a \in A\). Consequently, the inclusion
\[
\bigcup_{a \in A} \cl_{\tau}(\{a\}) \subseteq \left\{x \in X \colon \inf_{a \in A} d(x, A) = 0\right\}
\]
holds. Hence, \eqref{t3.4:e2} is valid if we have
\begin{equation}\label{t3.4:e4}
\bigcup_{a \in A} \cl_{\tau}(\{a\}) \supseteq \left\{x \in X \colon \inf_{a \in A} d(x, A) = 0\right\}.
\end{equation}
Suppose the contrary that there is \(x_0 \in X\) such that
\begin{equation*}
x_0 \notin \cl_{\tau}(\{a\})
\end{equation*}
for every \(a \in A\). Then, using Definition~\ref{d1.10} and equality~\eqref{t3.4:e3}, we can find \(t \in (0, \infty)\) such that
\[
d(x_0, a) = t
\]
for all \(a \in A\). It implies
\[
\inf_{a \in A} d(x_0, A) = t > 0.
\]
Thus, if we have
\[
x_0 \notin \bigcup_{a \in A} \cl_{\tau}(\{a\}),
\]
then the relation
\[
x_0 \notin \left\{x \in X \colon \inf_{a \in A} d(x, A) = 0\right\}
\]
holds. Inclusion~\eqref{t3.4:e4} follows. Hence, \((X, \tau)\) is an \(T_0\)-Alexandroff space.

\(\ref{t3.4:s2} \Rightarrow \ref{t3.4:s1}\). Let \((X, \tau)\) be a \(T_0\)-Alexandroff space. Let us define a mapping \(d \colon X \times X \to \mathbb{R}^+\) as
\begin{equation}\label{t3.4:e6}
d(x, y) := \begin{cases}
0 & \text{if } x \in \cl_{\tau}(\{y\}),\\
1 & \text{otherwise}.
\end{cases}
\end{equation}

We claim that \(d \colon X \times X \to \mathbb{R}^+\) is an equidistant quasi-metric and that \(O^d = \tau\) holds.

The mapping \(d\) is a quasi-metric on \(X\) iff conditions~\ref{d1.4:c1}--\ref{d1.4:c3} of Definition~\ref{d1.4} are satisfied.

Since \(x\) belongs to \(\cl_{\tau}(\{x\})\) for each \(x \in X\), condition~\ref{d1.4:c1} of Definition~\ref{d1.4} follows from~\eqref{t3.4:e6}.

Let us prove the validity of~\ref{d1.4:c2} (the triangle inequality). Suppose the contrary that there are \(x\), \(y\), \(z \in X\) such that
\begin{equation}\label{t3.4:e7}
d(x, y) > d(x, z) + d(z, y).
\end{equation}
Then using~\eqref{t3.4:e6} and \eqref{t3.4:e7} we obtain
\[
d(x, y) = 1 \text{ and } d(x, z) = d(z, y) = 0,
\]
that implies
\begin{equation}\label{t3.4:e8}
x \notin \cl_{\tau}(\{y\})
\end{equation}
and
\begin{equation}\label{t3.4:e9}
x \in \cl_{\tau}(\{z\}), \ z \in \cl_{\tau}(\{y\}).
\end{equation}
Membership relations~\eqref{t3.4:e9} and Theorem~\ref{t1.1} give us
\[
\cl_{\tau}(\{x\}) \subseteq \cl_{\tau}(\cl_{\tau}(\{z\})) = \cl_{\tau}(\{z\})
\]
and
\[
\cl_{\tau}(\{z\}) \subseteq \cl_{\tau}(\cl_{\tau}(\{y\})) = \cl_{\tau}(\{y\}),
\]
that implies
\begin{equation}\label{t3.4:e10}
\cl_{\tau}(\{x\}) \subseteq \cl_{\tau}(\{y\}).
\end{equation}
Inclusion~\eqref{t3.4:e10} and the membership relation \(x \in \cl_{\tau}(\{x\})\) imply
\[
x \in \cl_{\tau}(\{y\}),
\]
contrary to~\eqref{t3.4:e8}. Thus, the triangle inequality holds for all \(x\), \(y\), \(z \in X\).

Let us prove condition~\ref{d1.4:c3} of Definition~\ref{d1.4}. Let \(x\) and \(y\) be points of \(X\) such that
\[
d(x, y) = d(y, x) = 0.
\]
Then
\[
x \in \cl_{\tau}(\{y\}) \text{ and } y \in \cl_{\tau}(\{x\})
\]
hold by~\eqref{t3.4:e6}. Since \((X, \tau)\) is a \(T_0\)-space by statement~\ref{t3.4:s2}, we have the equality \(x = y\) by Proposition~\ref{p2.1}. Thus, condition~\ref{d1.4:c3} of Definition~\ref{d1.4} is valid.

Hence, the mapping \(d\) defined by formula~\eqref{t3.4:e6} is a quasi-metric on \(X\).

To complete the proof we must show that the equality
\begin{equation}\label{t3.4:e11}
O^d = \tau
\end{equation}
holds.

It is trivially valid that the topological space \((X, O^d)\) is quasi-metrizable by equidistant quasi-metric \(d\). Moreover, it was proved in the first part of the proof that the implication \(\ref{t3.4:s1} \Rightarrow \ref{t3.4:s2}\) is true. Consequently, \((X, O^d)\) is a \(T_0\)-Alexandroff space. Equality~\eqref{t3.4:e6} and Corollary~\ref{c1.9} give us the equality
\begin{equation}\label{t3.4:e12}
\cl_{\tau}(\{x\}) = \cl_{O^d}(\{x\}).
\end{equation}
for each \(x \in X\). Now~\eqref{t3.4:e11} follows from~\eqref{t3.4:e12} and Corollary~\ref{c2.4}.

The proof is completed.
\end{proof}

Let \(X\) be a given set and let \(\tau_1\) and \(\tau_2\) be two \(T_0\)-topologies on \(X\). We will write \(\tau_1 \approx \tau_2\) if and only if the equality
\[
\cl_{\tau_1}(\{x\}) = \cl_{\tau_2}(\{x\})
\]
holds for each \(x \in X\). Then \(\approx\) is an equivalence relation on the set \(\boldsymbol{\tau}_0(X)\) of all \(T_0\)-topologies on \(X\). The quotient set of \(\boldsymbol{\tau}_0(X)\) with respect to the relation \(\approx\) is the set \(\{[\tau^*]_{\approx} \colon \tau^* \in \boldsymbol{\tau}_0(X)\}\) of all equivalence classes \([\tau^*]_{\approx}\), where
\[
[\tau^*]_{\approx} := \{\tau \in \boldsymbol{\tau}_0(X) \colon \tau \approx \tau^*\}.
\]

Theorem~\ref{t3.1}, Proposition~\ref{p3.2} and Theorem~\ref{t3.4} imply the following corollary.

\begin{corollary}\label{c3.4}
Let \(X\) be a set. Then the following statements hold for every \(\tau^* \in \boldsymbol{\tau}_0(X)\):
\begin{enumerate}
\item There exists the unique Alexandroff topology \(\tau_1\) such that \(\tau_1 \in [\tau^*]_{\approx}\).
\item There exists the unique topology \(\tau_2 \in [\tau^*]_{\approx}\) such that \(\tau \subseteq \tau_2\) for every \(\tau \in [\tau^*]_{\approx}\).
\item There exists the unique topology \(\tau_3 \in [\tau^*]_{\approx}\) such that \((X, \tau_3)\) is quasi-metrizable by equidistant quasi-metric.
\end{enumerate}
Furthermore, the topologies \(\tau_1\), \(\tau_2\) and \(\tau_3\) are the same.
\end{corollary}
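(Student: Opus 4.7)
The plan is to identify all three topologies $\tau_1$, $\tau_2$, $\tau_3$ with a single object, namely the Alexandroff topology produced by Theorem~\ref{t3.1} applied to the mapping $\cl_{\tau^*}(\{\cdot\})$. All the heavy lifting has been done already; this corollary is essentially a bookkeeping consequence of Theorem~\ref{t3.1}, Proposition~\ref{p3.2}, and Theorem~\ref{t3.4}.

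\textbf{Step 1: construct $\tau_1$ and prove (i).} I would define $\cl_1 \colon X \to 2^X$ by $\cl_1(x) := \cl_{\tau^*}(\{x\})$. Since $\tau^* \in \boldsymbol{\tau}_0(X)$, the implication $\ref{t3.1:s1} \Rightarrow \ref{t3.1:s2}$ of Theorem~\ref{t3.1} (applied to $\tau^*$ itself) shows that this $\cl_1$ satisfies statement~\ref{t3.1:s2} of Theorem~\ref{t3.1}. The implication $\ref{t3.1:s2} \Rightarrow \ref{t3.1:s3}$ then yields a unique Alexandroff topology $\tau_1 \in \boldsymbol{\tau}_0(X)$ with $\cl_{\tau_1}(\{x\}) = \cl_1(x) = \cl_{\tau^*}(\{x\})$ for every $x \in X$. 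By definition of $\approx$ this means $\tau_1 \in [\tau^*]_{\approx}$, and uniqueness within $[\tau^*]_{\approx}$ of an Alexandroff representative follows from the uniqueness clause of Theorem~\ref{t3.1}\,\ref{t3.1:s3}.

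\textbf{Step 2: identify $\tau_2$ using Proposition~\ref{p3.2}.} For any $\tau \in [\tau^*]_{\approx}$ we have $\cl_\tau(\{x\}) = \cl_{\tau_1}(\{x\})$ for each $x \in X$, so Proposition~\ref{p3.2} with $\tau^A := \tau_1$ gives $\tau \subseteq \tau_1$. Hence $\tau_1$ itself lies in $[\tau^*]_{\approx}$ and is $\supseteq$-largest there, so setting $\tau_2 := \tau_1$ proves existence in~(ii); uniqueness of a $\subseteq$-maximum is automatic. This simultaneously establishes $\tau_2 = \tau_1$.

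\textbf{Step 3: identify $\tau_3$ using Theorem~\ref{t3.4}.} Because $(X,\tau_1)$ is a $T_0$-Alexandroff space, the implication $\ref{t3.4:s2} \Rightarrow \ref{t3.4:s1}$ of Theorem~\ref{t3.4} shows $(X,\tau_1)$ is quasi-metrizable by an equidistant quasi-metric, so $\tau_1$ is a valid choice for $\tau_3$. Conversely, if $\tau_3 \in [\tau^*]_{\approx}$ is quasi-metrizable by an equidistant quasi-metric, then the implication $\ref{t3.4:s1} \Rightarrow \ref{t3.4:s2}$ of Theorem~\ref{t3.4} shows $(X,\tau_3)$ is a $T_0$-Alexandroff space, and the uniqueness from Step~1 forces $\tau_3 = \tau_1$. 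Combining the three steps gives $\tau_1 = \tau_2 = \tau_3$.

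There is no real obstacle; the only thing I would be careful about is verifying that $\cl_1(x) := \cl_{\tau^*}(\{x\})$ really satisfies the hypotheses of Theorem~\ref{t3.1}\,\ref{t3.1:s2} (injectivity comes from Proposition~\ref{p2.1}, while $x \in \cl_1(x)$ and the $\cl_1(y) \subseteq \cl_1(x)$-monotonicity come directly from the closure axioms, exactly as in the proof of $\ref{t3.1:s1} \Rightarrow \ref{t3.1:s2}$ of Theorem~\ref{t3.1}).
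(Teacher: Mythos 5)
Your proposal is correct and follows exactly the route the paper intends: the paper gives no written proof, stating only that the corollary follows from Theorem~\ref{t3.1}, Proposition~\ref{p3.2} and Theorem~\ref{t3.4}, and your three steps are precisely the natural way those results combine (Theorem~\ref{t3.1} for existence and uniqueness of the Alexandroff representative, Proposition~\ref{p3.2} for maximality, Theorem~\ref{t3.4} for the equidistant quasi-metrization). Nothing is missing.
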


Let us consider now two examples of the quotient sets \([\tau^*]_{\approx}\).

\begin{example}\label{ex3.5}
Let \(X\) be a finite set. Then all quotient sets \([\tau^*]_{\approx}\) of the set \(\boldsymbol{\tau}_0(X)\) are singletons, i.e., the equality
\begin{equation}\label{ex3.5:e1}
[\tau^*]_{\approx} = \{\tau^*\}
\end{equation}
holds for every \(\tau^* \in \boldsymbol{\tau}_0(X)\). To prove~\eqref{ex3.5:e1} we only note that every topology on finite set \(X\) is an Alexandroff topology by Definition~\ref{d1.3.1}, because arbitrary set of subsets of \(X\) is also finite. Hence, \eqref{ex3.5:e1} follows from Corollary~\ref{c3.4}.
\end{example}

\begin{example}\label{ex3.6}
Let \(X\) be a set and let \(\boldsymbol{\tau}_1(X)\) and \(\boldsymbol{\tau}_2(X)\) be the set of all Frechet topologies on \(X\) and, respectively, the set of all Hausdorff topologies on \(X\). If \(\tau^*\) is an Hausdorff topology on \(X\), then the equality
\begin{equation}\label{ex3.6:e1}
[\tau^*]_{\approx} = \boldsymbol{\tau}_2(X)
\end{equation}
holds. Moreover, the unique Alexandroff topology \(\tau_1 \in [\tau^*]_{\approx}\), which exists by Corollary~\ref{c3.4}, is metrizable by equidistant metric on \(X\). To prove equality~\eqref{ex3.6:e1} and metrizability of \(\tau_1\) by equidistant metric we only note that the inclusion
\[
\boldsymbol{\tau}_2(X) \subseteq \boldsymbol{\tau}_1(X)
\]
holds and that every singleton \(\{x\}\), \(x \in X\), is a closed subset of \((X, \tau)\) for each \(\tau \in \boldsymbol{\tau}_1(X)\).
\end{example}

\section*{Funding}

Oleksiy Dovgoshey was supported by grant 359772 of the Academy of Finland.

\bibliographystyle{plain}
\bibliography{biblio}

\end{document}